\renewcommand{\a}{\alpha}
\renewcommand{\b}{\beta}
\newcommand{\D}{\nabla}
\newcommand{\h}{\widehat}
\renewcommand{\O}{\Omega}
\def\e{\varepsilon}
\renewcommand{\t}{\widetilde}
\newcommand{\R}{\mathbb{R}}
\def\N{\mathbb{N}}
\def\D{\nabla}
\def\a{\alpha}
\def\b{\beta}
\def\XXint#1#2#3{{\setbox0=\hbox{$#1{#2#3}{\int}$}
     \vcenter{\hbox{$#2#3$}}\kern-.5\wd0}}
\newcounter{bei}
\renewcommand{\o}{\overline}
\newcommand{\zwo}[2]{\begin{pmatrix} {#1}\\{#2} \end{pmatrix}}
\newtheorem{theorem}{Theorem}[section]
\newtheorem{proposition}[theorem]{Proposition}
\newtheorem{corollary}[theorem]{Corollary}
\DeclareMathOperator{\curl}{curl}
\DeclareMathOperator{\dist}{dist}
\DeclareMathOperator{\sym}{sym}
\begin{document}

\title{$C^{1,\a}$ $h$-principle for
\\
von K\'arm\'an constraints}

\author{{\sc Jean-Paul Daniel}
and {\sc Peter Hornung}\footnote{Address correspondence to: Fachbereich Mathematik,
TU Dresden, Germany, {\tt peter.hornung@tu-dresden.de}.}
}

\date{}

\maketitle

\begin{abstract}
Exploiting some connections between solutions $v : \Omega\subset\R^2\to\R$, 
$w : \Omega\to\R^2$ of
the system $\D v\otimes\D v + 2\sym\D w = A$
and the isometric immersion problem in two dimensions,
we provide a simple construction of $C^{1,\a}$ convex integration solutions for the former
from the corresponding result for the latter.
\end{abstract}

\paragraph{Keywords:} $h$-principle, isometric embeddings, Monge-Amp\`ere, von K\'arm\'an,
nonlinear elasticity

\paragraph{Mathematics Subject Classification:} 35J96, 74G20

\section{Introduction and main result}

The classical $h$-principle of Nash and Kuiper shows that there exist
surprisingly many $C^1$ solutions 
$u : \Omega\subset\R^2 \to\R^3$
to the isometric immersion system
\begin{equation}
\label{intro-1}
(\D u)^T(\D u) = g.
\end{equation}
In contrast, classical rigidity
results show that, among more regular immersions, being a solution of system \eqref{intro-1}
is as restrictive a condition as one might expect. A natural question
is whether such results extend to classes of $C^{1,\a}$ H\"older spaces:
for the $h$-principle one seeks the largest possible H\"older exponent
$\a\in (0, 1)$ and for the rigidity
the smallest possible one. We refer to \cite{CDS} and the references therein.
\\
Such a dichotomy between an $h$-principle on one hand and rigidity on the other
hand also applies to other PDE systems. A system for which this is to be expected
is the system
\begin{equation}
\label{intro-2}
\D v\otimes\D v + 2\sym\D w = A
\end{equation}
for $w : \Omega\to\R^2$ and $v : \Omega\to\R$.
This system arises naturally as a constraint in von K\'arm\'an theories (cf. \cite{FJM2})
in certain energy regimes. In that context, $w$
describes the in-plane displacement and $v$
the out-of-plane displacement.
It is clearly related to the Monge-Amp\`ere equation 
$\det\D^2 v = \curl\curl A$. We refer e.g. to \cite{FJM2}
for some details on this.
\\
System \eqref{intro-2} is closely related to \eqref{intro-1},
and it was shown in \cite{LP} that the
convex integration construction in \cite{CDS} can indeed be adapted to obtain
the same statement for system \eqref{intro-2}.
In this paper we show how the close
connection between \eqref{intro-2} and \eqref{intro-1}
can be used to derive $C^{1,\a}$ $h$-principles for \eqref{intro-2} directly
from similar results for \eqref{intro-1}, without having to repeat the construction.
\\

From now on
$\Omega \subset \R^2$ denotes a bounded and simply connected
domain with a smooth boundary. Our main result is the following:
\begin{theorem}\label{thm1}
Let $p\geq 2$, let $\beta \in (0,1)$ and let
$$
0 < \alpha < \min \left\{ \frac{1}{7}, \frac{\beta}{2}  \right\}.
$$
Then there exists $C > 0$ such that the following is true:
\\
For any $v\in C^2(\overline \Omega)$, $w\in C^2(\o\O, \R^2)$,
$A\in C^{0,\beta}(\overline \Omega,  \R^{2 \times 2}_{\sym})$
there exist $\o v \in C^{1,\alpha}(\overline \Omega)$
and $\o w \in C^{1,\a}(\o\O, \R^2)$
with
\begin{equation}
\label{thm1-1}
\|\D \o v - \D v \|_{C^0(\Omega)} \leq C \|\D v\otimes\D v + 2\sym\D w - A\|_{C^0(\Omega)}^{1/2}
\end{equation}
and
\begin{equation}
\label{thm1-1}
\begin{split}
\|\D \o w - \D w\|_{L^p(\Omega)} &\leq C \|\D v\|_{C^0(\Omega)} 
\|\D v\otimes\D v + 2\sym\D w - A\|_{C^0(\Omega)}^{1/2}
\\
&+
\|\D v\otimes\D v + 2\sym\D w - A\|_{C^0(\Omega)},
\end{split}
\end{equation}
and such that
\begin{equation}
\label{thm1-22}
2\sym\D\o w + \D\o v\otimes\D \o v = A.
\end{equation}
\end{theorem}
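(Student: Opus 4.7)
The plan is to reduce the von K\'arm\'an system to the isometric immersion problem and invoke the $C^{1,\alpha}$ $h$-principle of \cite{CDS} as a black box. First, since $w$ enters the equation only through $2\sym\nabla w$, I replace $A$ by $A-2\sym\nabla w$ and look for $\bar v,\tilde w\in C^{1,\alpha}$ solving $\nabla\bar v\otimes\nabla\bar v+2\sym\nabla\tilde w=A-2\sym\nabla w$; then $\bar w:=\tilde w+w$ satisfies \eqref{thm1-22} and the required bounds reduce to bounds on $(\bar v-v,\tilde w)$ in terms of the unchanged defect $\delta:=\|\nabla v\otimes\nabla v+2\sym\nabla w-A\|_{C^0}$.

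To construct $\bar v$, I consider the graph map $u_0(x):=(x,v(x))^T:\O\to\R^3$. Its induced metric $(\nabla u_0)^T\nabla u_0=I+\nabla v\otimes\nabla v$ differs from the target $g:=I+A$ (with the absorbed $A$) by exactly $\delta$. After slightly positivizing $g$ to ensure the strict short-immersion condition (and passing to the limit at the end), the CDS $C^{1,\alpha}$ $h$-principle in the range $\alpha<\min\{1/7,\beta/2\}$ yields $\bar u\in C^{1,\alpha}(\overline\O,\R^3)$ with $(\nabla\bar u)^T\nabla\bar u=g$ and $\|\bar u-u_0\|_{C^1}\le C\delta^{1/2}$. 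Setting $\bar v:=\bar u_3$ immediately gives the $C^0$ bound on $\nabla\bar v-\nabla v$ in \eqref{thm1-1}.

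To construct $\tilde w$, write $\bar u=(\bar u_H,\bar v)$; the iso immersion relation becomes
$$A-\nabla\bar v\otimes\nabla\bar v=(\nabla\bar u_H)^T\nabla\bar u_H-I.$$
Since the right-hand side is the pullback metric of the near-identity map $\bar u_H:\O\to\R^2$, one expects it to be ``flat'' in a distributional sense, which translates into the compatibility $\curl\curl[A-\nabla\bar v\otimes\nabla\bar v]=0$ as a distribution---equivalently, the distributional Monge--Amp\`ere identity $\det\nabla^2\bar v=-\tfrac12\curl\curl A$. Granted this, a Korn-type estimate on the simply connected smooth domain $\O$ produces $\tilde w\in W^{1,p}$ with $2\sym\nabla\tilde w=A-\nabla\bar v\otimes\nabla\bar v$ and $\|\nabla\tilde w\|_{L^p}\le C\|A-\nabla\bar v\otimes\nabla\bar v\|_{L^p}$. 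Decomposing
$$A-\nabla\bar v\otimes\nabla\bar v=(A-\nabla v\otimes\nabla v-2\sym\nabla w)+2\sym\nabla w+(\nabla v-\nabla\bar v)\otimes\nabla v+\nabla\bar v\otimes(\nabla v-\nabla\bar v)$$
(up to the substitution of Step~1) and invoking $\|\nabla\bar v-\nabla v\|_{C^0}\le C\delta^{1/2}$ yields the second estimate in \eqref{thm1-1}.

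\textbf{Main obstacle.} The heart of the argument is the distributional Monge--Amp\`ere identity for $\bar v$: a direct calculation for smooth $\phi$ gives $\curl\curl[(\nabla\phi)^T\nabla\phi-I]=-2\sum_m\det\nabla^2(\phi-x)_m$, which is generically nonzero, so the claimed vanishing for the CDS-constructed $\bar u_H\in C^{1,\alpha}$ (with $\alpha<1/7$ well below the classical rigidity threshold) is the subtle step. It must be argued by exploiting the specific structure of the CDS iteration---namely that $\bar u_H$ is a limit of smooth stages whose compatibility defect converges distributionally to zero---or, alternatively, by iterating the whole procedure with geometrically decaying defect so that the exact von K\'arm\'an relation is attained in the limit.
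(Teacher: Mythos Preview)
Your reduction to the case $w=0$ and the graph-map Ansatz are exactly the paper's starting point, and you have put your finger precisely on the real difficulty: for a generic $C^{1,\alpha}$ map $\bar u_H$ with $\alpha<1/7$ there is no reason why $\curl\curl\bigl[(\nabla\bar u_H)^T\nabla\bar u_H-I\bigr]$ should vanish distributionally, so one cannot simply solve $2\sym\nabla\tilde w=A-\nabla\bar v\otimes\nabla\bar v$. Your two suggested escapes (exploiting the internal structure of the CDS iteration, or re-iterating the whole scheme) are vague, and the first would in particular defeat the purpose of using \cite{CDS} as a black box.

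The paper resolves this obstacle by a device you are missing: a scaling parameter $t\to 0$ combined with the Friesecke--James--M\"uller geometric rigidity estimate. One sets $g_t=I+t^2A$ and $u_t(x)=(x,tv(x))^T$, so that $u_t^*e-g_t=t^2(\nabla v\otimes\nabla v-A)$; for small $t$ this simultaneously forces $g_t$ close to $I$ (so the hypotheses of the quantitative CDS statement are met regardless of $\|A\|_0$) and yields $\bar u_t=(\bar\Phi_t,t\bar v_t)$ with $\|\nabla\bar\Phi_t-I\|_0\le CDt$. Since $(\nabla\bar\Phi_t)^T\nabla\bar\Phi_t=I+t^2(A-\nabla\bar v_t\otimes\nabla\bar v_t)$, one has $\dist_{SO(2)}(\nabla\bar\Phi_t)\le Ct^2$ pointwise, and FJM-rigidity produces $R_t\in SO(2)$ and $\tilde w_t\in W^{1,p}$ with $\nabla\bar\Phi_t=R_t(I+t^2\nabla\tilde w_t)$ and $\|\nabla\tilde w_t\|_{L^p}\le C\|A-\nabla\bar v_t\otimes\nabla\bar v_t\|_{L^p}$, uniformly in $t$. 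Expanding the metric relation gives
\[
2\sym\nabla\tilde w_t + t^2(\nabla\tilde w_t)^T\nabla\tilde w_t = A-\nabla\bar v_t\otimes\nabla\bar v_t.
\]
The point is that no compatibility condition is needed here: the quadratic term $t^2(\nabla\tilde w_t)^T\nabla\tilde w_t$ is bounded in $L^1$ uniformly in $t$, hence $t^2$ times it tends to zero. Passing to the limit along a subsequence (uniform for $\nabla\bar v_t$, weak-$L^p$ for $\nabla\tilde w_t$) yields $2\sym\nabla\bar w=A-\nabla\bar v\otimes\nabla\bar v$ exactly. Thus the ``Monge--Amp\`ere obstacle'' is not proved for any fixed $\bar u_H$; it is dissolved in the limit $t\to 0$ by FJM-rigidity, which replaces your Korn-type step and supplies the $L^p$ bound on $\nabla\bar w$ at the same time.
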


{\bf Remarks.}
\begin{enumerate}
\item Theorem \ref{thm1} is a variant of \cite[Theorem 1]{CDS}.
It allows to improve a $C^1$ $h$-principle to
a $C^{1,\a}$ $h$-principle, cf. Corollary \ref{korollar}.
\item A variant of Theorem \ref{thm1} was stated 
in \cite{LP}. Theorem \ref{thm1} is more general
in that does not require $2\sym\D w + \D v\otimes\D v$
to be close to $A$. On the other hand 
it only yields $L^p$ rather than uniform bounds on $\D w$.
(For the actual convex integration result, however, this is immaterial. See
Corollary \ref{korollar} below.) The main difference to \cite{LP}
is that our short proof derives Theorem \ref{thm1} directly from the
corresponding result for isometric immersions \cite{CDS},
therefore avoiding the need of adapting each step of the construction in \cite{CDS}.
\end{enumerate}

\paragraph{Notation.}

For $n\in \N$, we denote  by $\mathbb{R}^{n \times n}_{\sym}$ the set of symmetric
$n \times n$ matrices. By $e$ we denote the standard Riemannian metric on $\R^n$.
Given an immersion $u$ into $\R^n$, we denote by $u^* e$ 
the pullback-metric, so that in coordinates
\begin{equation*}
(u^* e)_{ij}= \partial_i u \cdot \partial_j u.
\end{equation*}
For $k = 0, 1, ...$ we denote the usual $C^k$ norm by $\|u\|_k$.
For $\b\in (0, 1)$ the H\"older seminorm $[u]_{\b}$ is defined to
be the infimum over all $C$ such that
$$
|u(x) - u(y)|\leq C|x - y|^{\b}\mbox{ for all }x, y\in \O.
$$
The unit matrix is denoted by $I$.

\section{$h$-principle for isometric immersions}

An inspection of the proof in \cite{CDS} 
shows that in that paper the following more detailed version of \cite[Theorem 1]{CDS} is proven:

\begin{proposition}\label{haupt}
Let $n\in\N$, $\beta\in (0, 1)$, 
$$
0 < \a < \min\Big\{\frac{1}{1 + n(n+1)}, \frac{\beta}{2}\Big\},
$$
let  $g_0\in\R^{n\times n}_{\sym}$ be positive definite and let
$U\subset\R^n$ a smoothly bounded domain. There exist $\e_0$, $C$, $r > 0$
such that for all $\theta$, $\mu$, $\delta\in (0, \infty)$ satisfying
$\mu\geq\delta$ and
$
\delta^{\beta - 2}\mu^{-\beta}\theta^2 \leq \e_0
$
the following holds:
\\
If $g\in C^{0, \beta}(\o U, \R^{n\times n}_{\sym})$
satisfies
\begin{align}
\label{hr}
\|g - g_0\|_0 \leq r
\\
\label{htheta}
[g]_{\beta} \leq \theta^2
\end{align}
and if $u\in C^2(\o U, \R^{n + 1})$ satisfies
\begin{align}
\label{hdelta}
\|u^*e - g\|_0\leq\delta^2
\\
\label{hmu}
\|\D^2 u\|_0 \leq \mu,
\end{align}
then there exists an isometric immersion
$\o u\in C^{1,\a}(\o U, \R^{n+1})$ of $g$ with
\begin{align*}
\|\D\o u - \D u\|_0 \leq C\delta
\mbox{ and }
[\D\o u - \D u]_{\a} \leq C\mu^{\a}\delta^{1-\a}.
\end{align*}
\end{proposition}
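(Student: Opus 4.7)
The plan is to follow the convex integration scheme of Nash-Kuiper as sharpened in [CDS]. The proposition is proved by iterating an inductive \emph{stage}: given an immersion $u$ approximately satisfying $u^*e = g$ with metric defect of size $\delta^2$ and second derivatives controlled by $\mu$, one produces a new immersion $u'$ whose defect is substantially smaller, at the cost of a larger second-derivative bound. The final $C^{1,\a}$ isometric immersion $\o u$ arises as the $C^1$-limit of the sequence of iterates produced by applying successive stages with a carefully chosen sequence of parameters $(\delta_j,\mu_j)$.

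\textbf{One stage.} After reducing to the case where $g - u^*e$ is positive definite (using $\|g-g_0\|_0 \leq r$ small and inserting a fixed small correction), decompose the symmetric error as a finite sum
\[
g - u^*e \;\simeq\; \sum_{k=1}^{N} a_k^2(x)\, \xi_k \otimes \xi_k,\qquad N = \tfrac{n(n+1)}{2},
\]
by means of a smooth partition of unity on the cone of positive symmetric matrices, with fixed unit vectors $\xi_k \in \R^n$. For each $k$ one performs a single \emph{step}: regularize $u$ at a small spatial scale $\ell$ (to tame $[g]_\b$-type errors via convolution) and then superimpose a Kuiper spiral
\[
u \;\longmapsto\; u \;+\; \frac{1}{\la}\bigl(a_k \cos(\la\, x\cdot\xi_k)\, \zeta_k \;+\; a_k \sin(\la\, x\cdot\xi_k)\, \nu\bigr),
\]
where $\nu$ is a unit normal to $u(U)$ and $\zeta_k$ is a tangent vector chosen so that $\{\zeta_k,\nu\}$ is orthonormal and orthogonal to $\partial_{\xi_k} u$. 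A direct computation shows that the new pullback metric equals $u^*e + a_k^2\,\xi_k\otimes\xi_k$ up to errors of size $O(\la^{-1}) + O(\ell) + O(\mu/\la)$ and a purely high-frequency oscillatory part absorbed in the subsequent step.

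\textbf{Parameter balancing.} Taking $\ell$ and $\la$ as appropriate powers of $\delta$ and $\mu$ yields the quantitative stage estimates
\[
\|\D u' - \D u\|_0 \leq C\delta,\qquad \|u'^*e - g\|_0 \leq (\delta')^2,\qquad \|\D^2 u'\|_0 \leq \mu',
\]
where $\delta'$ is smaller than $\delta$ while $\mu'$ exceeds $\mu$ by a factor that is a power of $\mu/\delta$. The hypothesis $\delta^{\b-2}\mu^{-\b}\theta^2 \leq \e_0$ is precisely what ensures that the mollification error associated to $[g]_\b \leq \theta^2$ stays dominated by $\delta^2$. Interpolating between $\|\D u' - \D u\|_0 \leq C\delta$ and $\|\D^2(u'-u)\|_0 \leq C\mu'$ then gives the crucial H\"older bound
\[
[\D u' - \D u]_\a \leq C\, (\mu')^{\a}\, \delta^{1-\a}.
\]

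\textbf{Iteration and the range of $\a$.} One sets $\delta_j = \delta\, q^j$ with $q<1$ and propagates $\mu_j$ through the above stage recursion. Summability of $\sum_j \delta_j$ makes $u_j$ Cauchy in $C^1$, while summability of $\sum_j \mu_j^{\a}\delta_j^{1-\a}$ yields $\o u \in C^{1,\a}$; since each stage multiplies $\mu$ by roughly $(\mu/\delta)^{N}$, both series converge precisely when $\a < 1/(1+n(n+1))$. The limit then satisfies $(\o u)^*e = g$ together with the announced quantitative bounds. The principal technical obstacle is the bookkeeping: one must propagate the smallness condition $\delta_j^{\b-2}\mu_j^{-\b}\theta^2 \leq \e_0$ along the iteration simultaneously with all decay estimates, and the sharpness of the resulting threshold is what forces $\a < \min\{1/(1+n(n+1)),\,\b/2\}$.
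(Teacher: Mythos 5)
Your overall strategy --- redoing the Nash--Kuiper/\cite{CDS} iteration of stages and steps with the stated parameter balancing --- is in spirit exactly what lies behind Proposition \ref{haupt}; note, however, that the paper does not reprove it at all: the proposition is obtained by inspecting the proof of \cite[Theorem 1]{CDS} and recording the quantitative dependences that proof already yields. Measured against that, your sketch contains one genuine mathematical error and one substantive omission.

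The error is in the basic step. Your perturbation $u \mapsto u + \tfrac{a_k}{\la}\big(\cos(\la\, x\cdot\xi_k)\,\zeta_k + \sin(\la\, x\cdot\xi_k)\,\nu\big)$ with $\zeta_k$ tangent, orthonormal to $\nu$ and orthogonal to $\partial_{\xi_k}u$ is a Nash twist, which is not available in codimension one. Computing the new pullback metric, the first-order contribution is $a_k^2\,\xi_k\otimes\xi_k - a_k\sin(\la\, x\cdot\xi_k)\big(\xi_k\otimes(\D u)^T\zeta_k + (\D u)^T\zeta_k\otimes\xi_k\big) + O(1/\la)$: orthogonality to $\partial_{\xi_k}u$ only kills the $\xi_k$-component of $(\D u)^T\zeta_k$, not the whole vector (a nonzero tangent vector cannot be orthogonal to all $\partial_j u$). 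Since $a_k\sim\delta$ and $|(\D u)^T\zeta_k|\sim 1$, these cross terms are of size $\delta$, i.e.\ much larger than the $\delta^2$-defect being corrected; they do not shrink as $\la\to\infty$, and being sign-changing they cannot be ``absorbed in the subsequent step'', since each step only adds nonnegative primitive metrics $a^2\xi\otimes\xi$. This is precisely why \cite{CDS} use Kuiper-type corrugations: the oscillation takes place in the plane spanned by $\D u\,\xi_k$ and $\nu$, with non-circular profile functions chosen so that the order-$a_k$ cross terms cancel and the $\xi_k\xi_k$-entry gains exactly $a_k^2$ up to controlled errors. The omission: the content of Proposition \ref{haupt} is exactly the quantitative bookkeeping --- that $\e_0$, $C$, $r$ depend only on $n,\b,\a,g_0,U$, that the admissibility condition $\delta^{\b-2}\mu^{-\b}\theta^2\leq\e_0$ is precisely what controls the error coming from mollifying $g$ (via $[g]_\b\leq\theta^2$) at the scale dictated by $\delta$ and $\mu$, and that the iteration closes with $\|\D\o u-\D u\|_0\leq C\delta$ and $[\D\o u-\D u]_\a\leq C\mu^\a\delta^{1-\a}$ --- and your sketch asserts these outcomes rather than deriving them. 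To complete the argument you would either have to carry out the recursion for $(\delta_j,\mu_j)$ explicitly with the corrected step, or do what the paper does and verify that the proof in \cite{CDS} delivers exactly these bounds.
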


\section{$h$-principle for von K\'arm\'an constraints}

\begin{proposition}\label{prop0}
Theorem \ref{thm1} is true provided that, in addition, $w = 0$.
\end{proposition}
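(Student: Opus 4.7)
The plan is to deduce Proposition~\ref{prop0} by a direct application of Proposition~\ref{haupt}. I apply Proposition~\ref{haupt} with $n=2$ to the graph map $u_0 := (x, v) : \overline\Omega \to \R^3$ (where $x$ denotes the identity on $\overline\Omega$) and target metric $g := I + A$, with base $g_0 := I$ (after a preliminary rescaling, if needed, to arrange $\|g - g_0\|_0 \leq r$). Choosing $\delta^2 := \|\nabla v \otimes \nabla v - A\|_0$, $\mu := \|\nabla^2 v\|_0$ and $\theta^2 := [A]_\beta$, the hypotheses are verified for $\alpha < \min\{1/7, \beta/2\}$, which is precisely the range allowed by Proposition~\ref{haupt} for $n=2$. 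This produces $\overline u = (\Phi, \overline v) \in C^{1,\alpha}(\overline\Omega, \R^3)$ with $\overline u^* e = I + A$ and $\|\nabla \overline u - \nabla u_0\|_0 \leq C\delta$; setting $\overline v := \overline u_3$ yields the first estimate $\|\nabla\overline v - \nabla v\|_0 \leq C\|\nabla v\otimes\nabla v - A\|_0^{1/2}$ of Theorem~\ref{thm1}.

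For $\overline w$ I would use the identity obtained by expanding $\overline u^* e = I + A$: with $\Phi := (\overline u_1, \overline u_2)$ and $Z := \Phi - x$,
\begin{equation*}
A - \nabla\overline v\otimes\nabla\overline v = \nabla\Phi^T\nabla\Phi - I = 2\sym\nabla Z + (\nabla Z)^T\nabla Z.
\end{equation*}
I would then define $\overline w \in W^{1,p}(\Omega, \R^2)$ as a solution of $2\sym\nabla\overline w = A - \nabla\overline v\otimes\nabla\overline v$, produced by an $L^p$ Bogovskii-type right inverse to the symmetric-gradient operator on the simply connected smooth domain $\Omega$. The $L^p$ bound on $\nabla\overline w$ then follows from the $C^0$ bound $\|A - \nabla\overline v\otimes\nabla\overline v\|_0 \leq \|A - \nabla v\otimes\nabla v\|_0 + \|(\nabla\overline v+\nabla v)\otimes(\nabla\overline v-\nabla v)\|_0 \leq \delta^2 + C\|\nabla v\|_0\, \delta$, matching the second estimate of Theorem~\ref{thm1}. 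The appearance of an $L^p$ (rather than $C^0$) bound on $\nabla\overline w$ is exactly what one expects from this Bogovskii step, and is consistent with the trade-off noted in the remark after Theorem~\ref{thm1}.

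The main anticipated obstacle is the compatibility condition $\curl\curl[A - \nabla\overline v\otimes\nabla\overline v] = 0$ needed for the $L^p$ solvability of this equation. Applying $\curl\curl$ to $\overline u^* e = \nabla\Phi^T\nabla\Phi + \nabla\overline v\otimes\nabla\overline v = I + A$ reduces it, via the identity $\curl\curl(\nabla f\otimes\nabla f) = -2\det_{\mathrm{dist}}\nabla^2 f$, to the distributional cancellation $\det_{\mathrm{dist}}\nabla^2 Z_1 + \det_{\mathrm{dist}}\nabla^2 Z_2 = 0$. For generic $C^{1,\alpha}$ isometric immersions with $\alpha < 1/2$ this identity is not automatic; resolving it is the heart of the argument. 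I expect this to be handled either by slightly perturbing the target metric passed to Proposition~\ref{haupt} (say by adding a suitable rank-one correction $\nabla\chi\otimes\nabla\chi$ tuned so that the resulting iso immersion has the required compatibility), or by exploiting the one-dimensional corrugation structure of the Nash--Kuiper/CDS construction underlying Proposition~\ref{haupt}, for which each individual corrugation contributes a classically rank-one Hessian (with vanishing determinant) to each component of $\overline u$, yielding the required distributional cancellation when the contributions across scales are balanced appropriately.
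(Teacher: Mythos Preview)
Your proposal has two genuine gaps, and both are resolved in the paper by a single device that you are missing: a scaling parameter $t\to 0$.

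First, your verification of the hypotheses of Proposition~\ref{haupt} is incomplete. With your choices $\delta^2 = \|\nabla v\otimes\nabla v - A\|_0$, $\mu = \|\nabla^2 v\|_0$, $\theta^2 = [A]_\beta$, neither $\mu\geq\delta$ nor $\delta^{\beta-2}\mu^{-\beta}\theta^2\leq\e_0$ need hold; in fact, since $\beta-2<0$, the latter quantity blows up as $\delta\to 0$ with $\mu,\theta$ fixed. Your ``preliminary rescaling'' to arrange $\|g-g_0\|_0\leq r$ is precisely where this parameter should enter, but you treat it as a side issue rather than a structural one.

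Second, and more seriously, the compatibility obstruction you identify is real and your proposed resolutions do not work. Appealing to the corrugation structure inside the CDS construction means re-entering the proof of Proposition~\ref{haupt}, which defeats the paper's stated purpose of treating it as a black box; and a metric perturbation does not make the distributional Hessian determinants cancel at regularity $C^{1,\alpha}$ with $\alpha<1/7$.

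The paper's proof bypasses both issues at once. It sets $g_t = I + t^2 A$ and $u_t=(x,tv)$, so that $\delta_t,\mu_t,\theta_t$ all scale like $t$; this makes $\delta_t^{\beta-2}\mu_t^{-\beta}\theta_t^2$ independent of $t$ and controllable by taking the second-derivative bound $M$ large. Proposition~\ref{haupt} then yields isometric immersions $\bar u_t=(\bar\Phi_t,t\bar v_t)$ of $g_t$ with $\|\nabla\bar\Phi_t-I\|_0\leq CDt$. Now, instead of a Bogovskii/Ces\`aro inverse, the paper invokes FJM geometric rigidity to write $\nabla\bar\Phi_t = R_t + t^2\nabla\tilde w_t$ with $R_t\in SO(2)$ and $\|\nabla\tilde w_t\|_{L^p}$ bounded uniformly in $t$. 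The isometry equation then reads
\[
2\sym\nabla\tilde w_t + t^2(\nabla\tilde w_t)^T\nabla\tilde w_t = A - \nabla\bar v_t\otimes\nabla\bar v_t,
\]
and the point is that the unwanted quadratic term carries an explicit $t^2$ and is bounded in $L^1$, so it vanishes in the limit $t\to 0$. Passing to weak limits along a subsequence gives $\bar v\in C^{1,\alpha}$ and $\bar w\in W^{1,p}$ with $2\sym\nabla\bar w = A - \nabla\bar v\otimes\nabla\bar v$, without ever confronting a Saint-Venant compatibility condition.
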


Theorem \ref{thm1} follows 
at once from Proposition \ref{prop0}. For the readers' convenience
we include the details:

\begin{proof}[Proof of Theorem \ref{thm1}]
Applying Proposition \ref{prop0} with $\t A = A - 2\sym\D w$,
we obtain $\o v\in C^{1,\a}(\o\O)$ and $\t w\in C^{1,\a}(\o\O, \R^2)$
satisfying
\begin{align*}
\|\D\o v - \D v\|_0 &\leq C\|\D v\otimes\D v - \t A\|_0
\end{align*}
and
\begin{align*}
\|\D\t w\|_{L^p} &\leq
C \|\D v\|_{C^0(\Omega)} 
\|\D v\otimes\D v - \t A\|_{C^0(\Omega)}^{1/2}
\\
&+
\|\D v\otimes\D v - \t A\|_{C^0(\Omega)}
\end{align*}
and
$
2\sym\D\t w + \D\o v\otimes\D\o v = A - 2\sym\D w.
$
Hence by the definition of $\t A$ the claim follows with $\o w = w + \t w$.
\end{proof}

\begin{proof}[Proof of Proposition \ref{prop0}]
Set $g_0 = I$. For every $t > 0$ define $g_t = I + t^2A$. So $[g_t]_{\beta} = t^2[A]_{\beta}$.
Setting $\theta_t = \sqrt{1 + [A]_{\beta}}\cdot t$, we see that estimate \eqref{htheta}
(with index $t$; we omit this remark in what follows)
is satisfied. And \eqref{hr} is satisfied for any $r > 0$, provided that
$t < t_0$, where $t_0\in (0, \infty]$ is defined by
\begin{equation*}
t_0^2 = \frac{r}{\|A\|_0}.
\end{equation*}
Define $u_t : \O\to\R^3$ by
$$
u_t(x) = \zwo{x}{tv(x)}
$$
and define $D\geq 0$ by $\|\D v\otimes\D v - A\|_0 = D^2/2$.
We may assume that $D > 0$, because if $D = 0$ then there is nothing to prove.
Define $\delta_t = D t$.
\\
We have
$$
u_t^*e - g_t = t^2(\D v\otimes\D v - A).
$$
Hence \eqref{hdelta} is satisfied.
\\
Finally, for $M \geq 2(1 + \|\D^2 v\|_0 + D)$ and setting
$\mu_t = Mt$, estimate \eqref{hmu} and $\mu_t\geq\delta_t$ are
satisfied. 
On the other hand,
\begin{align}
\label{wichtig}
\delta_t^{\beta - 2}\mu_t^{-\beta}\theta_t^2 
&\leq (1 + [A]_{\beta})D^{\beta - 2}M^{-\beta} \mbox{ for all }t > 0.
\end{align}
If $M^{\b}$ exceeds $\e_0^{-1}(1 + [A]_{\beta})D^{\beta - 2}$, then
the right-hand side of \eqref{wichtig} does not exceed $\e_0$;
here $\e_0$ is the constant from Proposition \ref{haupt}.
\\
Hence for every $t\in (0, t_0)$
Proposition \ref{haupt} furnishes isometric immersions
$\o u_t\in C^{1,\a}(\o\Omega, \R^3)$ of $g_t$ satisfying
\begin{align}
\label{abschaetzung-2a}
\|\D\o u_t - \D u_t\|_0 \leq CD t\
\mbox{ and }\
[\D\o u_t - \D u_t]_{\a} &\leq CM^{\a} D^{1-\a} t.
\end{align}
Define $\o\Phi_t : \Omega\to\R^2$ and $\o v_t : \Omega\to\R$ by
$$
\o u_t = \zwo{\o\Phi_t}{t\o v_t}.
$$
Then \eqref{abschaetzung-2a}
imply that
\begin{equation}
\label{vau}
\begin{split}
[\D\o v_t - \D v]_{\a} &\leq CM^{\a}D^{1-\a}
\\
\|\D\o v_t - \D v\|_0 &\leq CD 
\end{split}
\end{equation}
for all $t\in (0, t_0)$. And $\|\D\o\Phi_t - I\|_0 \leq CDt$. 
In particular, $\det\D\o\Phi_t > 0$ for $t > 0$ small enough.
Moreover, since $\o u_t^* e = I + t^2 A$,
\begin{equation}
\label{phi}
(\D\o\Phi_t)^T(\D\o\Phi_t) = I + t^2\left( A - \D\o v_t\otimes\D \o v_t \right).
\end{equation}
Hence
$$
\|\sqrt{\D\o\Phi_t^T\D\o\Phi_t} - I\|_{L^p} \leq 
\|\D\o\Phi_t^T\D\o\Phi_t - I\|_{L^p} \leq 
t^2\left\| A - \D\o v_t\otimes\D \o v_t \right\|_{L^p}.
$$
Since $\det\D\o\Phi_t > 0$, we have almost everywhere
$$
\dist_{SO(2)}(\D\o\Phi_t) = \left| \sqrt{(\D\o\Phi_t)^T(\D\o\Phi_t)} - I \right|.
$$
Hence by FJM-rigidity (cf. \cite[Theorem 3.1]{fjm1} and the
sentence following its statement)
there exists a constant $C$ depending only on $p$
(and on $\O$) and there exist $R_t\in SO(2)$ as well as $\o w_t\in W^{1,p}(\O, \R^2)$ 
such that, for $t > 0$ small enough,
$$
\|\D\o w_t\|_{L^p}\leq C\| A - \D\o v_t\otimes\D \o v_t\|_{L^p},
$$
and such that
$$
\D\o\Phi_t = R_t + t^2 \D\o w_t.
$$
Denoting by $\t R_t\in SO(3)$ the matrix
with rotation axis $(0, 0, 1)^T$ and in-plane rotation $R_t$, define
$$
\t u_t = \t R_t^T\o u_t
$$
and $\t w_t = R_t^T\o w_t$.
Then
$$
\D\t u_t = \zwo{I + t^2\D \t w_t}{t\D\o v_t}
$$
and
\begin{equation}
\label{wee}
\|\D\t w_t\|_{L^p}\leq C\| A - \D\o v_t\otimes\D \o v_t\|_{L^p}.
\end{equation}
Since $\t u_t$ is an isometric immersion of $g_t$, we have
\begin{equation}
\label{vauwee}
2\sym\D\t w_t + t^2 (\D\t w_t)^T(\D\t w_t) = A - \D\o v_t\otimes\D \o v_t.
\end{equation}
By \eqref{vau} and \eqref{wee} there exists a sequence $t\to 0$ and $\o v\in C^{1,\a}$ such
that $\D\o v_t\to \D\o v$ uniformly and such that
$\D\t w_t$ converges weakly in $L^p$ to the gradient of some $\o w\in W^{1,p}$.
By \eqref{wee}, the matrix fields
$(\D\t w_t)^T(\D\t w_t)$ remain uniformly bounded in $L^1$ as $t\to 0$.
Hence letting $t\to 0$ in \eqref{vauwee}, we conclude that
$$
2\sym\D\o w = A - \D\o v\otimes\D \o v.
$$
Moreover, taking the limes inferior in \eqref{wee}, we have
$$
\|\D\o w\|_{L^p}\leq C\| A - \D\o v\otimes\D \o v\|_{L^p}.
$$
And by \eqref{vau}
\begin{align*}
\| A - \D\o v\otimes\D \o v\|_0 
&\leq C\| A - \D v\otimes\D v\|_0 + C\|\D v\|_0 \| A - \D v\otimes\D v\|_0^{1/2}.
\end{align*}
\end{proof}

Combining Theorem \ref{thm1} with a Nash-Kuiper result
one obtains the following; see \cite{LP} for a similar result.

\begin{corollary}\label{korollar}
Let $\beta$, $\a$ and $A$ be as in Theorem \ref{thm1}.
Let $v\in C^1(\overline \Omega)$ and $w\in C^1(\overline \Omega, \R^2)$
be such that
$$
2\sym\D w + \D v\otimes\D v \leq A - cI\mbox{ as symmetric matrices,}
$$
for some constant $c > 0$, and let $\e > 0$. Then there exist $\o v\in C^{1,\a}(\o\O)$ 
and $\o w\in C^{1,\a}(\o\O, \R^2)$
with
$$
\|\o w - w\|_{C^0(\Omega, \R^2)} + \|\o v - v\|_{C^0(\Omega)}\leq\e
$$
such that
$$
2\sym\D\o w + \D\o v\otimes\D\o v = A \mbox{ on }\Omega.
$$
\end{corollary}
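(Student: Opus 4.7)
The plan is to combine Theorem \ref{thm1}, which improves smooth approximate solutions into $C^{1,\a}$ exact solutions with quantitative defect-controlled error, with a preliminary Nash--Kuiper-type $C^1$ density statement. One cannot apply Theorem \ref{thm1} directly to $(v,w)$: the bound $\|\D\o v-\D v\|_{C^0}\leq C\|\D v\otimes\D v+2\sym\D w-A\|_{C^0}^{1/2}$ only forces the displacement to be small once the defect is already small, whereas our strict subsolution has a defect of size at least $c$. So we first need to approximate $(v,w)$ in $C^0$ by smooth subsolutions with arbitrarily small defect.

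First step (density): invoke a $C^1$ density result for the von K\'arm\'an constraint in the spirit of \cite{LP} (alternatively, iterate Theorem \ref{thm1} itself on a slowly relaxed sequence of strict subsolutions, exploiting the margin $cI$ in the strict inequality $A-cI\geq 2\sym\D w+\D v\otimes\D v$ to absorb the cumulative $C^0$ displacements). This produces, for every prescribed tolerance, a sequence of $C^\infty$ pairs $(v_k,w_k)$ on $\o\Omega$ with
$$
\|v_k-v\|_{C^0}+\|w_k-w\|_{C^0}\leq \e/2
$$
and defect $\delta_k:=\|\D v_k\otimes\D v_k+2\sym\D w_k-A\|_{C^0}\to 0$ as $k\to\infty$.

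Second step: fix any exponent $p>2$ (permitted by $p\geq 2$ in Theorem \ref{thm1}). Applying Theorem \ref{thm1} to each $(v_k,w_k)$ yields a pair $(\o v_k,\o w_k)\in C^{1,\a}(\o\Omega)\times C^{1,\a}(\o\Omega,\R^2)$ solving $2\sym\D\o w_k+\D\o v_k\otimes\D\o v_k=A$, with
$$
\|\D\o v_k-\D v_k\|_{C^0}\leq C\delta_k^{1/2},\qquad \|\D\o w_k-\D w_k\|_{L^p}\leq C(\|\D v_k\|_{C^0}+1)\delta_k^{1/2}.
$$
After fixing $\o v_k$ and $\o w_k$ by convenient additive constants (this affects neither their gradients nor the equation), Morrey's embedding $W^{1,p}(\Omega)\embed C^0(\o\Omega)$, valid for $p>2$ in two dimensions, combined with Poincar\'e's inequality gives $\|\o w_k-w_k\|_{C^0}\to 0$, while integrating the uniform gradient bound yields $\|\o v_k-v_k\|_{C^0}\to 0$. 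For $k$ sufficiently large, $(\o v,\o w):=(\o v_k,\o w_k)$ therefore satisfies $\|\o v-v\|_{C^0}+\|\o w-w\|_{C^0}\leq\e$ and is the required exact solution.

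The main obstacle is the density step: once smooth approximations with vanishing defect are in hand, the remainder reduces to applying Theorem \ref{thm1} and passing through the Sobolev embedding. Producing these approximations is exactly the role played in \cite{LP} by the $C^1$ convex integration construction adapted from \cite{CDS}; any such statement suffices to close the argument.
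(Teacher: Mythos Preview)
Your approach matches the paper's: invoke the $C^1$ density result from \cite{LP} to obtain smooth near-solutions with arbitrarily small defect, apply Theorem \ref{thm1} with some $p>2$, and conclude via the embedding $W^{1,p}\hookrightarrow C^0$. The only point the paper makes explicit that you leave tacit is that the approximants can be taken with $\|\D v_k\|_{C^0}$ bounded independently of $k$ (citing \cite[Remark 3.3]{LP}), which is what ensures $(\|\D v_k\|_{C^0}+1)\delta_k^{1/2}\to 0$.
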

\begin{proof}
Let $\delta\in (0, 1)$ and let $p\in (2, \infty)$.
Following \cite{CDS}, let $\t v$ respectively $\t w\in C^{\infty}(\o\Omega)$ be 
$C^1$-close to $v$ respectively $w$.
Applying \cite[Theorem 2.1]{LP} with $\t v$, $\t w$ and some
smooth uniform approximation of $A$, one obtains $\h v$, $\h w\in C^1$ such that
$$
\|\h w - w\|_0 + \|\h v - v\|_0 + \|2\sym\D\h w + \D\h v\otimes\D\h v - A\|_0\leq\delta^2.
$$
By approximation, we may assume that $\h v$, $\h w\in C^2(\o\Omega)$.
Applying Theorem \ref{thm1} we obtain $\o v$, $\o w\in C^{1,\a}(\o\Omega)$ satisfying
\begin{align*}
\|\D \o v - \D\h v \|_0 &\leq C \|\D \h v\otimes\D \h v + 2\sym\D\h w - A\|_0^{1/2}\leq C\delta
\end{align*}
and
$$
\|\D \o w - \D\h w\|_{L^p} \leq C(\delta + \|\D\h v\|_0)\delta,
$$
and $2\sym\D\o w + \D\o v\otimes\D\o v = A$. Notice
that $\h v$ can be chosen such that $\|\D\h v\|_0\leq C(1 + \|\D\t v\|_0)\leq C(1 + \|\D v\|_0)$
for some constant $C$ independent of $\delta$, cf. \cite[Remark 3.3]{LP}.
\\
The claim now follows from the continuous embedding of $W^{1,p}$ into $C^0$, and
from the arbitrariness of $\delta$.
\end{proof}

\vspace{1cm}

{\bf Acknowledgements.} Both authors acknowledge support by the DFG.

\def\cprime{$'$}

\end{document}